\newtheorem{thm}{Theorem}[section]
\newtheorem*{introthm*}{Theorem}
\newtheorem{cor}[thm]{Corollary}
\newtheorem*{lem*}{Lemma}
\newtheorem{prop}[thm]{Proposition}
\theoremstyle{definition}
\newtheorem{claim}[thm]{Claim}
\theoremstyle{remark}
\newtheorem{rem}[thm]{Remark}
\newtheorem*{notation*}{Notation}
\renewcommand{\P}{{\mathbb P}}
\newcommand{\C}{{\mathbb C}}
\newcommand{\ii}{{\mathrm i}}
\DeclareMathOperator{\Aut}{Aut}
\DeclareMathOperator{\Sym}{Sym}
\author{Lev Borisov}
\address{Hill Center Department of Mathematics, Rutgers University, NJ 08854}
\email{borisov@math.rutgers.edu}
\urladdr{https://sites.math.rutgers.edu/\~{}borisov}
\author{Mattie Ji}
\address{Brown University, Department of Mathematics, Box 1917, 151 Thayer Street, Providence, RI 02912, USA}
\email{mattie\_ji@brown.edu}
\author{Yanxin Li}
\address{Hill Center Department of Mathematics, Rutgers University, NJ 08854}
\email{yl1203@scarletmail.rutgers.edu}
\title[Explicit equations of $(a=7,p=2,\emptyset,D_3 X_7)$]{Explicit equations of the fake projective plane $(a=7,p=2,\emptyset,D_3 X_7)$}
\begin{document}

\begin{abstract}
We find explicit equations of the fake projective plane $(a=7,p=2,\emptyset,D_3 X_7)$, which lies in the same class as the fake projective plane    
$(a=7,p=2,\emptyset,D_3 2_7)$ with $21$ automorphisms whose equations were previously found by Borisov and Keum. The method involves finding a birational model of a common Galois cover of these two surfaces.
\end{abstract}

\maketitle

\section{Introduction}
Fake projective planes (FPPs for short) are defined as complex algebraic surfaces of general type with Hodge numbers equal to that of $\C\P^2$.
The first example of an FPP was constructed by Mumford in \cite{Mumford} using the method of $2$-adic uniformization. Efforts of multiple researchers over the ensuing decades  resulted in a complete classification of these surfaces as free quotients of the complex $2$-ball
$$
\mathcal B^2= \{|z_1|^2 + |z_2|^2<1\}\subset \C^2
$$
by explicit discrete subgroups of $\operatorname{PU}(2,1)$. We refer the reader to \cite{BK19} for the history of the field.

\smallskip
However, if one is given a fake projective plane $\mathcal B^2/\Gamma$ with explicit generators and relations of $\Gamma$, it is still a highly nontrivial problem to write explicit equations of $\mathcal B^2/\Gamma$ inside some projective space. The issue here is that there is no known algorithm for calculating modular forms in a way that would allow one to find polynomial equations among them.  An extensive program of finding equations of fake projective planes, initiated in \cite{BK19} and continued in \cite{BF20,BBF20,Aut21,ZL, BJLM}, has up until now produced explicit equations for $10$ out of $50$ conjugate pairs of FPPs. This paper describes a method of obtaining one more pair, bringing the count up to $11$. As is in all other efforts of finding FPPs, heavy computer calculations are necessary. We provide the code and the results in the supplementary materials \cite{BJL23+}. Our primary tool is \texttt{Mathematica} \cite{Ma}, with some computations performed in \texttt{Magma} \cite{magma} and \texttt{Macaulay2} \cite{M2}.

\smallskip
The fake projective plane $X = (a=7,p=2,\emptyset,D_3 X_7)$ lies in the same class as the fake projective plane    
$Y=(a=7,p=2,\emptyset,D_3 2_7)$ which implies that there is a surface $Z$ which is an \'etale Galois cover of both $X$ and $Y$.
Our construction hinges on the observation that in this particular case the Galois groups in question are the dihedral group $D_{14}$ of $14$ elements and the cyclic group $C_{14}$. More precisely, there exists the following diagram of maps of surfaces.
\begin{equation}
Y \stackrel {\mu}{\longleftarrow} W \stackrel {\pi}{\longleftarrow} Z \stackrel {\rho}{\longrightarrow} X
\end{equation}
where 
\begin{itemize}
\item $\mu$ is an unramified cyclic $C_2$ quotient map;
\item $\pi$ is an unramified cyclic $C_7$ quotient map;
\item $\mu\circ\pi$ is an unramified  quotient map by the group isomorphic to $D_{14}$;
\item $\rho$ is an unramified cyclic $C_{14}$ quotient map.
\end{itemize}
The above is, of course, a direct consequence of the relations among the fundamental groups of these surfaces, which we denote by $\Gamma_X,\ldots,\Gamma_W$. Note that all of them are finite index subgroups in the maximal arithmetic subgroup $\bar\Gamma$ computed in \cite{CS11+}.

\smallskip
The paper is organized as follows. In Section~\ref{galoiscovers} we study the relationship between the subgroups $\Gamma_X,\Gamma_Y,\Gamma_Z,\Gamma_W$ of $\bar\Gamma$ which is key to our approach. In Section \ref{secZ} we describe how we obtain an embedding of $W$ in $\C\P^{19}$ and a birational model of $Z$ in $\C\P^{12}$. In Section \ref{secX} we describe the method which allowed us to pass from $Z$ to an embedding of $X$ into $\C\P^9$.
Finally, in Section \ref{comments} we make a couple of comments regarding possible further directions of this project.

{\bf Acknowledgements}. This project sprang from the REU experience organized as part of the larger DIMACS REU at Rutgers University. This research was conducted while the second author was participating in the 2023 DIMACS REU program at the Rutgers University Department of Mathematics, mentored by the first author. We are grateful to Lazaros Gallos, Kristen Hendricks, and Rutgers University DIMACS for organizing the REU program and making this project possible. We also thank Sargam Mondal for useful questions and comments.

\section{Galois covers}\label{galoiscovers}
As was pointed out in the Introduction, the fake projective plane $X= (a=7,p=2,\emptyset,D_3 X_7)$ is in the same class (in the sense of \cite{PY,PY2, CS10}) as the 
well-understood fake projective plane $Y=  (a=7,p=2,\emptyset,D_3 2_7)$ whose equations were constructed in \cite{BK19}. This means that the corresponding subgroups $\Gamma_X$ and $\Gamma_Y$ of $\operatorname{PU}(2,1)$ are finite index subgroups in some arithmetic group $\bar\Gamma$. In our case, $\Gamma_Y$ is normal while $\Gamma_X$ is not, and both have index $21$, see \cite{CS11+}.

\smallskip
While any two finite index subgroups contain a common normal subgroup of finite index, this index can in general be quite large. Fortunately, in our case, we found a subgroup $\Gamma_Z\subset \bar\Gamma$ which is a normal subgroup of index $14$ in both $\Gamma_X$ and $\Gamma_Y$, so that $Z = \mathcal B^2/\Gamma_Z$ is an unramified Galois cover of both $X$ and $Y$. 

\begin{prop}\label{group294}
The commutator subgroup $\Gamma_Z =(\Gamma_X)'$ is a normal subgroup of $\bar\Gamma$. Moreover, $\Gamma_Z$ is contained in both $\Gamma_X$ and $\Gamma_Y$. The quotient group $\bar\Gamma/\Gamma_Z$ has order $294$ and is isomorphic to a certain semidirect product $(D_{14}\times C_7) \rtimes C_3$ with generators
$(t_1,t_2,t_3,t_4)$ and relations
\begin{align*}
1=t_1^2=t_2^7=(t_1 t_2)^2 = t_3^7,~t_1t_3=t_3t_1, t_2 t_3 =t_3 t_2, \\
1=t_4^3,~t_4 t_1 t_4^{-1} = t_1, ~t_4 t_2 t_4^{-1} = t_2^4, ~
t_4 t_3 t_4^{-1} = t_3^2. 
\end{align*}
Here  $D_{14}$ denotes the dihedral group of $14$ elements generated by $t_1$ and $t_2$ and the action of $C_3$ is seen in the above relations.
\end{prop}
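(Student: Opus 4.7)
The plan is a computer-assisted verification based on the explicit presentation of $\bar\Gamma$ from \cite{CS11+}, in which $\Gamma_X$ and $\Gamma_Y$ appear as specific finite-index subgroups. There are three things to check: (i) $\Gamma_Z := (\Gamma_X)'$ is contained in $\Gamma_Y$; (ii) $\Gamma_Z$ is normal in $\bar\Gamma$; and (iii) the quotient $\bar\Gamma/\Gamma_Z$ has the listed presentation of order $294$.

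First I would compute the abelianization $\Gamma_X^{ab}\cong H_1(X,\Z)$ by Reidemeister--Schreier and Smith normal form, verifying $\Gamma_X^{ab}\cong C_{14}$ and hence $[\Gamma_X:\Gamma_Z]=14$. For (i), the image of $\Gamma_X$ in $\bar\Gamma/\Gamma_Y$ (a group of order $21$) has abelianization which is a quotient of $C_{14}$; since neither $C_3$ nor $C_{21}$ is a quotient of $C_{14}$, the image cannot be the whole group of order $21$, so it is a proper, and therefore cyclic, subgroup. Hence the image is abelian and $\Gamma_Z\subset\Gamma_Y$.

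For (ii), normality of $\Gamma_Z$ in $\bar\Gamma$ does not follow from characteristicness alone, since $\Gamma_X$ itself is not normal. My plan is to realize $\Gamma_Z$ as a characteristic subgroup of a normal subgroup of $\bar\Gamma$, routing through the intermediate surface $W$. Compute $\Gamma_Y^{ab}$ and verify that it admits a unique $C_2$-quotient whose kernel is $\Gamma_W$, making $\Gamma_W$ characteristic in $\Gamma_Y$ and hence normal in $\bar\Gamma$. Then verify $\Gamma_W^{ab}\cong C_7$, so that $(\Gamma_W)'$ is characteristic in $\Gamma_W$ and thus normal in $\bar\Gamma$. Finally, $(\Gamma_W)' = (\Gamma_X)'$ follows by comparing indices inside $\Gamma_X$: both yield abelian quotients of order $14$, matching $[\Gamma_X:(\Gamma_X)']$. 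A more direct alternative, also feasible, is to verify $g\Gamma_Z g^{-1}=\Gamma_Z$ for each generator $g$ of $\bar\Gamma$ by coset enumeration in GAP or Magma.

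Once $\Gamma_Z$ is normal with $[\bar\Gamma:\Gamma_Z]=294$, step (iii) proceeds by choosing explicit preimages of $t_1,\ldots,t_4$ in $\bar\Gamma$, verifying the listed relations hold modulo $\Gamma_Z$, and using coset enumeration to confirm that these relations present a group of exactly $294$ elements, thereby identifying $\bar\Gamma/\Gamma_Z$ with $(D_{14}\times C_7)\rtimes C_3$. The main obstacle is (ii); the other two steps are essentially bookkeeping once the presentation of $\bar\Gamma$ and the abelianization calculations are in hand.
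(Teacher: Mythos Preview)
Your overall plan matches the paper's: start from the Cartwright--Steger presentation of $\bar\Gamma$, identify $\Gamma_X$ and $\Gamma_Y$ as explicit finite-index subgroups, and verify everything by machine. The paper does exactly this, simply invoking \texttt{Magma} to check that $(\Gamma_X)'$ is normal in $\Gamma_Y$ and in $\bar\Gamma$, and then exhibiting the four words $t_1,\dots,t_4$ (found by trial and error) whose images generate $\bar\Gamma/\Gamma_Z$ and satisfy the listed relations. Your step~(iii) and your ``direct alternative'' for step~(ii) are precisely this.

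Your conceptual shortcut for (i) is a genuine improvement over the paper's bald computer check: once $\Gamma_X^{ab}\cong C_{14}$ is known, the image of $\Gamma_X$ in the order-$21$ quotient $\bar\Gamma/\Gamma_Y$ cannot be the whole group (its abelianization would be $C_3$ or $C_{21}$, neither a quotient of $C_{14}$), hence is cyclic, hence abelian, and $(\Gamma_X)'\subset\Gamma_Y$ follows.

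The conceptual route you sketch for (ii), however, does not go through as written. The hypothesis that $\Gamma_Y$ has a \emph{unique} $C_2$-quotient is almost certainly false: the paper's own language (``the unique nontrivial \emph{automorphism-invariant} $2$-torsion line bundle on $Y$'') signals that $H_1(Y,\Z)$ has several $2$-torsion elements, only one of which is $\Aut(Y)$-fixed. So $\Gamma_W$ is not characteristic in $\Gamma_Y$ by uniqueness; its normality in $\bar\Gamma$ comes instead from invariance under the specific conjugation action of $\bar\Gamma/\Gamma_Y$, which is again a computer check. Likewise, the claim $\Gamma_W^{ab}\cong C_7$ is not a priori clear, and your index-matching argument for $(\Gamma_W)'=(\Gamma_X)'$ is set up inside $\Gamma_X$, where $(\Gamma_W)'$ has no obvious reason to sit before the proposition is established. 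In short, drop the $\Gamma_W$ detour and fall back to your direct \texttt{Magma}/\texttt{GAP} verification of normality, which is exactly what the paper does.
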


\begin{proof}
The group $\bar\Gamma$ is computed in \cite[bargammapresentations.txt]{CS11+} as the finitely presented group with generators $z$ and $b$ and relations
\begin{align*}
z^7,
(b^{-2}z)^3,
(b^2z^{-2}b^2z^2)^3,
(b^2z^{-2}b^2z^4)^3,
b^3z^{-2}b^{-1}z^2b^{-2}z,\\
b^3 zb^3z^3bz^2b^{-1}z^{-1},
b^3z^2b^2z^{-2}b^{-1}z^{-1}b^{-3}zb^{-1}z^{-1}.
\end{align*}
The subgroups $\Gamma_X$ and $\Gamma_Y$ are generated by
$$
b^3,
zb^3z,
bz^2b^{-1}z
$$
and
$$
b^3, (zbz^{-1})^3,
b z b^2z^{-2},
z b z^3b^{-1}
$$
respectively.

\smallskip
\texttt{Magma} \cite{magma} readily computes \cite[Section2.txt]{BJL23+} that $\Gamma_Z=(\Gamma_X)'$ is a normal subgroup of $\Gamma_Y$ and $\bar\Gamma$. After that, with some trial and error, we found generators 
of $\bar\Gamma/\Gamma_Z$ (images of) 
$$
t_1=b^3,~
t_2=bzb^2z^{-2}b^3,~
t_3=bz^2b^{-1}z,~
t_4=b^4
$$
which satisfy the relations of the proposition. See \cite[Section2.txt]{BJL23+} for the \texttt{Magma} verification.
\end{proof}

From now on we will tacitly identify the quotient $\bar\Gamma/\Gamma_Z$ with the above semidirect product.
\begin{prop}
The quotients $\Gamma_X/\Gamma_Z$ and $\Gamma_Y/\Gamma_Z$ are given by $(C_2\times C_7)\rtimes\{1\}$ and  $(D_{14}\times \{1\})\rtimes\{1\}$
respectively, for an appropriate choice of $C_2\subset D_{14}$.  
\end{prop}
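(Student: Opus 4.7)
The plan is to compute the images of the listed generators of $\Gamma_X$ and $\Gamma_Y$ inside the quotient $\bar\Gamma/\Gamma_Z \cong (D_{14}\times C_7)\rtimes C_3$ identified in Proposition~\ref{group294}, and then to recognize the subgroup each set generates. Since $[\bar\Gamma:\Gamma_X]=[\bar\Gamma:\Gamma_Y]=21$ while $[\bar\Gamma:\Gamma_Z]=294$, both $\Gamma_X/\Gamma_Z$ and $\Gamma_Y/\Gamma_Z$ have order exactly $14$. Hence it suffices to exhibit, for each, a subgroup of order $14$ of the prescribed form that already contains the images of all the given generators.

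For $\Gamma_X$, two of the three generators are immediately identified via Proposition~\ref{group294}: $b^3 = t_1$ and $bz^2b^{-1}z = t_3$. These commute and have coprime orders $2$ and $7$, so $\langle t_1, t_3\rangle \cong C_2\times C_7$ sits inside $D_{14}\times C_7$ with trivial $C_3$-component and has order $14$. First I would use \texttt{Magma} to rewrite the remaining generator $zb^3z$ in terms of $t_1,\dots,t_4$ modulo $\Gamma_Z$ and verify that it also lies in $\langle t_1, t_3\rangle$; the statement for $\Gamma_X$ then follows with the $C_2$ of the proposition being $\langle t_1\rangle$.

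For $\Gamma_Y$, the target subgroup is $\langle t_1, t_2\rangle \cong D_{14}$, which is exactly $D_{14}\times\{1\}\subset \bar\Gamma/\Gamma_Z$. Again $b^3 = t_1\in\langle t_1, t_2\rangle$, and the identification $t_2 = bzb^2z^{-2}b^3$ together with $t_1^2=1$ gives $bzb^2z^{-2}\equiv t_2 t_1 \in \langle t_1, t_2\rangle$ at once. I would then carry out the analogous \texttt{Magma} reduction for the two remaining generators $(zbz^{-1})^3$ and $zbz^3b^{-1}$, confirm that each lies in $\langle t_1, t_2\rangle$, and conclude by the order count.

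The hard part is purely computational: expressing the explicit $b,z$-words for the generators of $\Gamma_X$ and $\Gamma_Y$ in terms of $t_1,\dots,t_4$ modulo $\Gamma_Z$ is a nontrivial word-problem calculation inside a finitely presented group of order $294$. Fortunately, the machinery needed is already deployed in the proof of Proposition~\ref{group294}, so the verification reduces to a routine invocation of \texttt{Magma}'s coset-enumeration and subgroup-membership routines.
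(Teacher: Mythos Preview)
Your approach is correct and matches the paper's, which simply defers to a \texttt{Magma} verification that the quotients are generated by $\{t_1,t_3\}$ and $\{t_1,t_2\}$ respectively. Note, however, that the \texttt{Magma} checks you propose for the remaining generators are unnecessary: once you have observed that $t_1,t_3\in\Gamma_X/\Gamma_Z$ and $t_1,t_2\in\Gamma_Y/\Gamma_Z$ directly from the identifications in Proposition~\ref{group294}, the inclusions $\langle t_1,t_3\rangle\subseteq\Gamma_X/\Gamma_Z$ and $\langle t_1,t_2\rangle\subseteq\Gamma_Y/\Gamma_Z$ together with your order-$14$ count already force equality, so no further computation is needed.
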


\begin{proof}
We verify in \cite[Section2.txt]{BJL23+} that the quotients $\Gamma_X/\Gamma_Z$ and $\Gamma_Y/\Gamma_Z$ are generated by 
$\{t_1,t_3\}$ and $\{t_1,t_2\}$ respectively.
\end{proof}

\begin{cor}
The surfaces $X$ and $Y$ are given by  $X = Z/C_{14}$ and $Y = Z/D_{14}$. 
\end{cor}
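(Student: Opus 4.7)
The plan is to deduce the corollary directly from the preceding proposition by invoking the standard fact that if $\Gamma_Z$ is normal in $\Gamma$ and both act freely and properly discontinuously on $\mathcal{B}^2$, then
$$\mathcal{B}^2/\Gamma \;=\; \bigl(\mathcal{B}^2/\Gamma_Z\bigr)/(\Gamma/\Gamma_Z).$$
By Proposition~\ref{group294}, $\Gamma_Z$ is normal in both $\Gamma_X$ and $\Gamma_Y$ (indeed in all of $\bar\Gamma$), and the quotients act freely since the ambient actions on $\mathcal{B}^2$ are free. Thus the covers $Z\to X$ and $Z\to Y$ are unramified Galois covers with Galois groups $\Gamma_X/\Gamma_Z$ and $\Gamma_Y/\Gamma_Z$ respectively.

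It then remains only to name the abstract groups appearing. From the previous proposition, $\Gamma_Y/\Gamma_Z = D_{14}\times\{1\}$, which is literally $D_{14}$, giving $Y = Z/D_{14}$. For $X$, the previous proposition identifies $\Gamma_X/\Gamma_Z$ with the subgroup $(C_2\times C_7)\rtimes\{1\}\subset (D_{14}\times C_7)\rtimes C_3$ generated by $t_1$ and $t_3$. Here $t_1$ has order $2$ and $t_3$ has order $7$, and they commute by the relation $t_1 t_3 = t_3 t_1$ recorded in Proposition~\ref{group294}; the semidirect products with $\{1\}$ are just direct products. Since $\gcd(2,7)=1$, the Chinese Remainder Theorem gives $C_2\times C_7 \cong C_{14}$, so $X = Z/C_{14}$.

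There is no real obstacle here: the only thing to check beyond invoking the previous proposition is that the two commuting generators $t_1,t_3$ have coprime orders, which is immediate from the presentation. One could, if desired, flag in a sentence that the action on $Z$ is free (so that $X,Y$ are genuine smooth quotients rather than orbifolds), but this is automatic from the freeness of $\Gamma_X$ and $\Gamma_Y$ on $\mathcal{B}^2$.
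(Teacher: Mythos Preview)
Your argument is correct and is exactly the intended one: the paper gives no separate proof of this corollary, treating it as immediate from the preceding proposition, and you have simply spelled out the obvious details (normality of $\Gamma_Z$, identification of the quotient groups, and $C_2\times C_7\cong C_{14}$). There is nothing to add or correct.
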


\begin{rem}\label{autY}
The elements $t_3$ and $t_4$ generate the quotient group $\bar\Gamma/\Gamma_Y$, isomorphic to $C_7\rtimes C_3$. It is the automorphism group of $Y$ and it acts on the homogeneous coordinates of the embedding of $Y$ into $\C\P^9$ by scaling and/or permuting the coordinates, see \cite{BK19}.
\end{rem}

\smallskip
We will also be interested in the intermediate surface $W= Z/C_7$ with the action of $C_7$   coming from the normal subgroup of $D_{14}$ in $\bar\Gamma/\Gamma_Z$. The corresponding group $\Gamma_W$ is an index two subgroup of $\Gamma_Y$ generated by $\Gamma_Z$ and $t_2$. The surface $W$ is the unramified double cover of $Y$ which corresponds to the unique nontrivial automorphism-invariant $2$-torsion line bundle on $Y$, see \cite{BK19,BJLM}. Indeed, both $\Gamma_W$ and $\Gamma_Y$ are normal in $\bar\Gamma$ (see \cite[Section2.txt]{BJL23+}) and thus the index two subgroup $\Gamma_W\subset \Gamma_Y$ is preserved by the conjugation action of $\bar\Gamma/\Gamma_Y$. 

\smallskip
In what follows we will need the following.
\begin{prop}
The Hodge numbers of $Z$ are 
$h^{1,0}(Z) = 0$, $h^{1,1}(Z)=14$ and $h^{2,0}(Z)=13$.
\end{prop}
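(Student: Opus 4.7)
The plan is to use the fact that $Z$ is an étale Galois cover of $Y$ of degree $14$, combined with one nontrivial group-theoretic input.

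First, recall that a fake projective plane $Y$ has $\chi(\mathcal O_Y)=1$ and topological Euler characteristic $e(Y)=3$. Since both invariants are multiplicative in étale covers, the degree-$14$ map $Z\to Y$ gives $\chi(\mathcal O_Z)=14$ and $e(Z)=42$. Writing out these two equations in terms of Hodge numbers (using $h^{p,q}=h^{q,p}$, $h^{2,0}=h^{0,2}$, $h^{1,0}=h^{2,1}$ by Serre duality, and $h^{2,2}=h^{0,0}=1$), one obtains
\begin{align*}
1 - h^{1,0}(Z) + h^{2,0}(Z) &= 14,\\
2 - 4h^{1,0}(Z) + 2h^{2,0}(Z) + h^{1,1}(Z) &= 42.
\end{align*}
Thus everything reduces to computing $h^{1,0}(Z)$: once that is known, $h^{2,0}(Z)$ and then $h^{1,1}(Z)$ follow by elementary algebra, and the claimed values $0,13,14$ correspond exactly to $h^{1,0}(Z)=0$.

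To prove $h^{1,0}(Z)=0$, I would use that the universal cover $\mathcal B^2$ is contractible, so $Z$ is a $K(\Gamma_Z,1)$ and $H_1(Z,\Z)\cong \Gamma_Z^{\mathrm{ab}}$. Since $h^{1,0}(Z)=\tfrac12 b_1(Z)$ equals half the rank of $\Gamma_Z^{\mathrm{ab}}$, it suffices to verify that $\Gamma_Z^{\mathrm{ab}}$ is finite. This is a pure group-theoretic statement about the finitely presented group $\Gamma_Z=(\Gamma_X)'$, and it can be handed off to \texttt{Magma} in exactly the same style as the computations already invoked in Proposition~\ref{group294}: form the index-$294$ subgroup $\Gamma_Z$ of $\bar\Gamma$ via its generators (obtained by a Reidemeister--Schreier rewrite applied to the commutator subgroup of $\Gamma_X$), then call the abelian-quotient routine to confirm that $\Gamma_Z^{\mathrm{ab}}$ has rank zero.

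The only real obstacle is this last \texttt{Magma} step. In principle computing the abelianization of an index-$294$ subgroup of a finitely presented group could blow up, but since the ambient group $\bar\Gamma$ already has a workable presentation used throughout the paper, and the abelianization of a finite-index subgroup is a routine computation (Reidemeister--Schreier followed by Smith normal form on the relation matrix), I expect no serious difficulty. Once finiteness of $\Gamma_Z^{\mathrm{ab}}$ is confirmed, the Hodge numbers come out as stated.
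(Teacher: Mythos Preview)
Your proposal is correct and follows essentially the same route as the paper: multiplicativity of $\chi(\mathcal O)$ and $\chi_{\mathrm{top}}$ under the \'etale degree-$14$ cover reduces everything to $h^{1,0}(Z)=0$, which is then deduced from a \texttt{Magma} check that $\Gamma_Z^{\mathrm{ab}}$ is finite. The paper's proof is terser but invokes exactly the same ingredients, including the same \texttt{Magma} verification in \cite[Section2.txt]{BJL23+}.
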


\begin{proof}
The Euler characteristics of $\mathcal O_Z$ is $14$, since $Z\to Y$ is unramified. We used the \texttt{Magma} file \cite[Section2.txt]{BJL23+} to check that $h^{1,0}(Z)=0$ since the abelianization of $Z$ is finite. This implies that $h^{2,0}(Z)=13$, and $h^{1,1}(Z)=14$ follows from $\chi_{top}(Z) = 14 \chi_{top}(Y) = 42$.
\end{proof}

Moreover, we can identify the representation of $\bar\Gamma/\Gamma_Z$ on the $13$-dimensional space $H^0(Z,K_Z)$. Let us first discuss the action of $D_{14}\times C_7$. Recall that $D_{14}$ has two one-dimensional representations, which we will call $V_+$ and $V_{-}$ coming from $C_2=D_{14}/C_7$ and three two-dimensional representations which we will call $V_{1}$,  $V_{2}$ and $V_4$.
Specifically, the generator of $C_7\subset D_{14}$ has eigenvalues $\zeta_7^{\pm i}$ on $V_i$, and there is a basis of $C_7$ eigenvectors of $V_i$ on which $C_2\subset D_{14}$ acts as a permutation. Irreducible representations of $D_{14}\times C_7$ are then tensor products of the above representations of $D_{14}$ and a one-dimensional representation of $C_7$, and we denote them accordingly. For example, $V_{2,4}$ stands for the tensor product of $V_2$ with the representation of $C_7$ that sends its generator to $\zeta_7^4$. It has a basis $(r_{2,4},r_{-2,4})$ where subscripts denote the weights modulo $7$ with respect to the action of $C_7\times C_7$.

\begin{prop}\label{content}
As a representation of $D_{14}\times C_7$, after a choice of a generator of $\{1\}\times C_7$, the space $H^0(Z,K_Z)$ is isomorphic to a direct sum of representations 
$$
V_{-,0}\oplus V_{1,1} \oplus V_{4,2} \oplus V_{2,4} \oplus V_{1,a} \oplus V_{4,2a} \oplus V_{2,4a}
$$
for some $a\in \{3,5,6\}\hskip -5pt\mod 7$. 
\end{prop}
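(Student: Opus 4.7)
My plan is to pin down the isotypic structure of $V := H^0(Z,K_Z)$ in three stages: first the one-dimensional $V_{\pm,j}$ and $V_{a,0}$ components via \'etale quotient arguments, then the orbit structure of the two-dimensional summands via $C_3$-equivariance, and finally the specific orbits via a holomorphic Lefschetz trace. The basic tool is that whenever $H \subseteq D_{14}\times C_7$ acts freely on $Z$, the quotient $Z/H$ is smooth and $K_Z = \pi^* K_{Z/H}$, so $V^H \cong H^0(Z/H, K_{Z/H})$. I apply this with $H = D_{14}$, $\langle t_2\rangle$, and $\langle t_3\rangle$, each acting freely because its preimage in $\bar\Gamma$ is torsion-free (lying inside $\Gamma_Y$, $\Gamma_W$, and $\Gamma_X$ respectively). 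This yields $\dim V^{D_{14}} = h^{2,0}(Y) = 0$, $\dim V^{\langle t_2\rangle} = h^{2,0}(W) = 1$, and $\dim V^{\langle t_3\rangle} = h^{2,0}(Z/\langle t_3\rangle) = 1$; the latter two values follow from $\chi(\cO) = 2$ on these smooth \'etale double covers of $Y$ and $X$ together with $h^{1,0}=0$ inherited from $Z$. Because $\langle t_2\rangle$ acts trivially exactly on the $V_{\pm,j}$ and $\langle t_3\rangle$ exactly on the $V_{a,0}$, these three constraints force $V_{-,0}$ to occur with multiplicity $1$ and every other $V_{\pm,j}$ or $V_{a,0}$ to be absent.

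The remaining $12$-dimensional complement is therefore a direct sum of two-dimensional $V_{a,j}$ with $a \in \{1,2,4\}$ and $j \in (\Z/7)^\times$. Because $V$ extends to a representation of $(D_{14}\times C_7)\rtimes C_3$, the isotypic multiplicities are constant on the orbits of $t_4$-conjugation, which acts on irreducibles by $V_{a,j}\mapsto V_{2a,4j}$; these orbits are parametrized by the $t_4$-invariant $c = aj \in (\Z/7)^\times$, and each contributes dimension $6$ per unit of multiplicity. Hence the complement is either a single orbit with multiplicity $2$, or two distinct orbits each with multiplicity $1$.

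To rule out the first case and pin down the $j$-structure in the second, I apply the holomorphic Lefschetz fixed point formula to $t_3$. Since $\langle t_3\rangle$ acts freely, $h^{0,1}(Z) = 0$, and Serre duality identifies $H^2(\cO_Z)$ with $H^0(K_Z)^\vee$, the formula gives $\mathrm{tr}(t_3\mid V) = -1$. Writing $c_j$ for the total multiplicity of $V_{a,j}$-type summands summed over $a$, the trace expands as $1 + 2\sum_{j=1}^{6} c_j \zeta_7^j$; rewriting $\zeta_7^6 = -(1+\zeta_7+\cdots+\zeta_7^5)$ and using $\Q$-linear independence of $\{1,\zeta_7,\ldots,\zeta_7^5\}$, the unique nonnegative integer solution is $c_j = 1$ for every $j \neq 0$. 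This excludes the multiplicity-two scenario (which would force $c_j=2$ at three values of $j$) and requires the two orbits' $j$-multisets to partition $\{1,\ldots,6\}$. The only such partition into $t_4$-orbits under $j\mapsto 4j$ is $\{1,2,4\}\sqcup\{3,5,6\}$, the quadratic residues and non-residues mod $7$; so the decomposition has the form $V_{-,0}\oplus O_c\oplus O_{c'}$ with $c\in\{1,2,4\}$ and $c'\in\{3,5,6\}$. Replacing the generator $t_3$ of $\{1\}\times C_7$ by $t_3^{c^{-1}}$ rescales the labels so that $c$ becomes $1$ and $c'$ becomes some $a \in \{3,5,6\}$, giving the decomposition $V_{-,0}\oplus V_{1,1}\oplus V_{4,2}\oplus V_{2,4}\oplus V_{1,a}\oplus V_{4,2a}\oplus V_{2,4a}$ as stated. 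The main obstacle is the careful justification that the relevant lifts of $t_3$ are torsion-free so that the Lefschetz computation applies; distinguishing among the three allowed values of $a$ is beyond these character-theoretic arguments and would require finer input.
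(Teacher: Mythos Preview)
Your argument is correct. It rests on the same two ingredients as the paper's proof---the holomorphic Lefschetz formula for the free actions, and $C_3$-equivariance---but organizes them differently. The paper observes in one stroke that $H^0(Z,K_Z)\oplus H^2(Z,K_Z)$ is the \emph{regular} representation of both $D_{14}$ and $C_{14}$ (Lefschetz applied to every nonidentity element at once); regularity for $D_{14}$ immediately gives the isotypic shape $V_{-}\oplus 2V_1\oplus 2V_2\oplus 2V_4$, and regularity for $C_{14}$ forces the six second indices $j$ to be a permutation of $\{1,\dots,6\}$, after which $C_3$ and a generator change finish. You instead extract the same information in stages: invariants under $D_{14}$, $\langle t_2\rangle$, $\langle t_3\rangle$ pin down the single $V_{-,0}$; $C_3$-orbit bookkeeping reduces the rest to two orbits; and a single Lefschetz trace for $t_3$ yields $c_j=1$ for all $j$, which selects one orbit with $j$-set $\{1,2,4\}$ and one with $\{3,5,6\}$. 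The paper's packaging is shorter; yours makes the role of each subgroup more transparent. One small wording point: when you say ``$\langle t_3\rangle$ [acts trivially] exactly on the $V_{a,0}$'' you mean all irreducibles with second index $0$, including $V_{\pm,0}$; and strictly speaking Serre duality gives $\mathrm{tr}(t_3\mid V^\vee)=-1$, which equals $\mathrm{tr}(t_3\mid V)$ only because $-1$ is real (or because the same Lefschetz computation applies to $t_3^{-1}$).
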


\begin{proof}
Since the actions of $D_{14}$ and $C_{14}$ have no fixed points and $H^1(Z,K_Z)=0$, the holomorphic Lefschetz fixed point formula implies that $H^0(Z,K_Z)\oplus H^2(Z,K_Z)$ is a regular representation for both of these groups. Since $H^2(Z,K_Z)$ is a trivial one-dimensional representation, we see that 
$H^0(Z,K_Z)$ is isomorphic to
$$
V_{-,0}\oplus V_{1,a_1} \oplus V_{1,a_2} \oplus  V_{2,a_3} \oplus V_{2,a_4} \oplus  V_{4,a_5} \oplus V_{4,a_6}
$$
with $(a_1,\ldots,a_6)$ a permutation of $(1,\ldots 6)$. 

\smallskip
By picking a generator of $\{1\}\times C_7$, we can assume without loss of generality that the first two 
$V$-s are $V_{1,1}$ and $V_{1,a}$ for some $a\not\in \{0,1\}\mod 7$. 
Furthermore, we have the action of the group $C_3$ which conjugates $V_{a,b}$ into $V_{4a,2b}$ because of Proposition \ref{group294}. This uniquely determines the rest of the $V$-s. It remains to observe that $a$ can not be $2$ or $4$ either because it would contradict the above permutation property.
\end{proof}

\begin{rem}
Since we can choose a generator of $\{1\}\times C_7$ so that $V_{1,a}$ becomes $V_{1,1}$ and $V_{1,1}$ becomes $V_{1,a^{-1}}$, we can further reduce to $a=3$ or $a=6$ cases. We will later see that $a=3$ case works. In principle, we could have used more extensive \texttt{Magma} calculations and the holomorphic Lefschetz formula to fully specify the representation of $\Aut(Z)$ on $H^0(Z,K_Z)$, but we find it easier to use this approach, which will be enough for our purposes.
\end{rem}

\begin{cor}\label{r-s}
There is a basis
$$
(r_{0,0}, r_{1,1},r_{-1,1},r_{4,2},r_{-4,2},r_{2,4},r_{-2,4},r_{1,a},r_{-1,a},r_{4,2a},r_{-4,2a},r_{2,4a},r_{-2,4a})
$$
of $H^0(Z,K_Z)$, indexed by subscripts in $(C_7)^2$, with the following action of the automorphism group of $Z$:
\begin{itemize}
\item the generator of $C_7\times \{1\}$ sends $r_{i,j}$ to $\zeta_7^i r_{i,j}$;
\item the generator of $\{1\}\times C_7$ sends $r_{i,j}$ to $\zeta_7^j r_{i,j}$;
\item the generator of $C_2\subset  D_{14}$ sends $r_{0,0}$ to $(-r_{0,0})$ and interchanges $r_{i,j}$ and $r_{-i,j}$;
\item the generator of $C_3$ sends $r_{i,j}$ to $r_{4i,2j}$.
\end{itemize}
\end{cor}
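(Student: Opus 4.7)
The plan is to build the basis summand by summand from the decomposition of Proposition \ref{content}, using that the abelian subgroup $C_7\times C_7\subset D_{14}\times C_7$ generated by $t_2$ and $t_3$ has simple joint spectrum on each irreducible, and then to tie the summands together by the $C_3$-action.

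First I would observe that on each $2$-dimensional summand $V_{p,q}$ the joint eigenvalues of $(t_2,t_3)$ are exactly $(p,q)$ and $(-p,q)$, so pick nonzero joint eigenvectors $r_{p,q}$ and $r_{-p,q}$ for these two weights. The generator $t_1$ of $C_2\subset D_{14}$ swaps the two weight spaces (because $t_1t_2t_1=t_2^{-1}$), so after rescaling one of the two vectors I may arrange that $t_1$ literally interchanges $r_{p,q}\leftrightarrow r_{-p,q}$. For the $1$-dimensional summand $V_{-,0}$ pick any nonzero $r_{0,0}$; here $C_7\times C_7$ is trivial and $t_1$ acts by $-1$ by the very definition of $V_{-,0}$.

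Next I would take the generator of $C_3$ to be $s=t_4^{-1}$. From $t_4t_2t_4^{-1}=t_2^4$ one gets $t_2t_4^{-1}=t_4^{-1}t_2^4$, so for $v$ of $t_2$-weight $p$ the vector $s\cdot v$ has $t_2$-weight $4p$; similarly $t_4t_3t_4^{-1}=t_3^2$ gives that $s$ sends $t_3$-weight $q$ to $2q$. Hence $s$ permutes the $2$-dimensional summands by $(p,q)\mapsto(4p,2q)$, producing the two $3$-cycles $V_{1,1}\to V_{4,2}\to V_{2,4}\to V_{1,1}$ and $V_{1,a}\to V_{4,2a}\to V_{2,4a}\to V_{1,a}$, and fixing $V_{-,0}$. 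Within each orbit I would \emph{define} the remaining basis vectors by transport, setting $r_{4,2}:=s\cdot r_{1,1}$ and $r_{2,4}:=s\cdot r_{4,2}$; the closure relation $s\cdot r_{2,4}=r_{1,1}$ is automatic from $s^3=1$. Since $t_1t_4=t_4t_1$ by Proposition \ref{group294}, the same transport applied to $r_{-1,1}$ produces $r_{-4,2},r_{-2,4}$ compatibly with the $t_1$-swap. The same construction handles the second $3$-cycle.

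The one point requiring care is the $C_3$-action on the fixed summand $V_{-,0}$: \emph{a priori} $s\cdot r_{0,0}=\omega\, r_{0,0}$ for some cube root of unity $\omega$, and since $V_{-,0}$ is $1$-dimensional no rescaling of $r_{0,0}$ can change $\omega$. I expect the main obstacle to be the verification that $\omega=1$; I would settle this by computing directly on the presentation of Proposition \ref{group294} using \texttt{Magma} as in \cite[Section2.txt]{BJL23+}, determining which of the three possible one-dimensional characters of $\bar\Gamma/\Gamma_Z$ (trivial on $C_7\times C_7$, sign on $t_1$, varying cube root on $t_4$) is realized as the $C_7\times C_7$-fixed $t_1$-antiinvariant line inside $H^0(Z,K_Z)$.
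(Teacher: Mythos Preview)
Your argument is precisely the unpacking of the paper's one-line ``Follows immediately from Proposition~\ref{content}'': diagonalize $C_7\times C_7$ on each summand, normalize so that $t_1$ swaps the two eigenvectors, and transport by $s=t_4^{-1}$ along each $C_3$-orbit of summands. Your weight computation $(i,j)\mapsto(4i,2j)$ for $t_4^{-1}$ is correct, and the commutation $t_1t_4=t_4t_1$ indeed guarantees that the transported pairs remain $t_1$-swapped.

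You have also spotted something the paper's proof glosses over. Proposition~\ref{content} only determines the $D_{14}\times C_7$-structure, so the cube root $\omega$ by which $t_4$ acts on the one-dimensional line $V_{-,0}$ is genuinely not fixed by it, and as you note no rescaling of $r_{0,0}$ can change $\omega$. The paper does not address this in the proof; in fact the Remark immediately after Proposition~\ref{content} explicitly concedes that the full $\Aut(Z)$-representation has not been pinned down and that a holomorphic Lefschetz / \texttt{Magma} argument would be required---exactly the fix you propose. The paper then simply proceeds with $\omega=1$ (see the formula for $g_3$ at the end of Section~\ref{secZ}), which is consistent with the later explicit computations, but as a matter of logic your write-up is the more careful one here.
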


\begin{proof}
Follows immediately from Proposition \ref{content}.
\end{proof}

\section{Finding explicit equations of $Z$}\label{secZ}
In this section, we describe the rather lengthy process that allowed us to construct a birational model of $Z$ as a repeated cyclic cover of $Y$.

\smallskip
In the first step, we constructed the double cover $W$ of $Y$ as a surface in its bicanonical embedding into $\C\P^{19}$, cut out by $100$ quadratic equations. While it was already accessible via methods of \cite{BK19}, we used a different approach. We note that $W\to Y$ corresponds to the $2$-torsion divisor $D$ on $Y$ and $H^0(W,2K_W)$ can be naturally identified with $H^0(Y,2K_Y)\oplus H^0(Y,2K_Y+D)$. The action of $\Aut(Y)$ extends to 
$H^0(Y,2K_Y+D)$ and there is exactly one invariant section $s$ on it, which we find as follows.

\smallskip
Recall that there is a unique ample divisor class $H$ on $Y$ such that $3H=K_Y$. In \cite{BJLM}, we studied linear systems $|nH+T|$ for small $n$ and torsion divisors $T$. Specifically, $|4H+D|$ has three $C_7$-equivariant sections that are permuted by $C_3$, and we know explicit equations of these curves on $Y$. This allows us to find the curve $\{s=0\}$ on $Y$ by looking for an automorphism-invariant section of $6K_Y = 18H$ that vanishes on
the three aforementioned curves in $|4H+D|$ linear system and using
$$
6H+D = 18 H - 3 ( 4H+D).
$$
Once we have found this curve $\{s=0\}$, we are able to find an $\Aut(Y)$-invariant section of $12H=2(6H+D)$ which vanishes twice on it, i.e. it is equal to $s^{\otimes 2}$, up to scaling, which we denote by $U_{10}$. We can then use $s$ to embed sections of $2K_Y+D$ into $H^0(Y, 4K_Y)$. Note that the map of $55$-dimensional spaces $\Sym^2(H^0(Y,2K_Y))\to H^0(Y, 4K_Y)$ is an isomorphism, and we can view sections of $4K_Y$ as quadratic polynomials in the defining variables $U_0,\ldots, U_9$ of $Y$ in \cite{BK19}. The image of $\otimes s: H^0(2K_Y+D)\to H^0(4K_Y)$ is the ten-dimensional space of quadratic polynomials in $U_0,\ldots,U_9$ which are zero on $\{s=0\}$. We pick a basis of it with a nice action of $\Aut(Y)$ (the $C_7$ acts by scaling the basis elements and $C_3$ acts by permuting them) and denote it by $U_{10},\ldots, U_{19}$.  

\smallskip
We constructed the basis  $(P_0,\ldots,P_{19})$ of $H^0(W,2K_W)$ by looking at $P_i = U_i$ for $i=0,\ldots,9$ and $P_i = \frac {U_i}{\sqrt {U_{10}}}$ for $i=10,\ldots,19$. We then picked random points on $Y$ to find quadratic relations on $P_i $. Note that the covering involution of $W\to Y$ acts by $(+1)$ on $P_0,\ldots,P_9$ and by $(-1)$ on $P_{10},\ldots,P_{19}$. We also had to be careful with the scaling of $U_{10}$ to ensure that our model of $W$ is defined over the field ${\mathbb Q}(\sqrt{-7})$. More precisely, we used
\begin{align*}
U_{10} =& \frac{(49+13 \ii \sqrt{7})}{56} \Big(\frac1{64} (-17-7 \ii \sqrt{7}) U_0^2+U_1 U_4+U_2 U_5+U_3 U_6
\\&
+\frac 18 (-1+\ii \sqrt{7}) U_1 U_7+
\frac 18 (-1+\ii \sqrt{7}) U_2 U_8+\frac 18 (-1+\ii \sqrt{7}) U_3 U_9\Big),
\end{align*}
see \cite[Section3.nb]{BJL23+}.

\begin{claim}
The double cover $W$ of $Y$ is cut out by $100$ quadratic equations from \cite[W\_equations.txt]{BJL23+}. 
\end{claim}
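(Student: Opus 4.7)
The plan is to verify the claim computationally, splitting the assertion ``cut out by'' into the two separate sub-claims that each of the listed polynomials vanishes on $W$ and that their common zero locus equals $W$ scheme-theoretically.

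First, I would certify that the $100$ candidate quadratics really do vanish on $W$. By construction $P_0,\ldots,P_9$ are pullbacks of the bicanonical coordinates $U_0,\ldots,U_9$ of $Y$, while $P_{10},\ldots,P_{19}$ equal $U_j/\sqrt{U_{10}}$ for $j=10,\ldots,19$, so any quadratic relation $Q(P_0,\ldots,P_{19})=0$ translates, after clearing $\sqrt{U_{10}}$, into a pair of identities about explicit quartic polynomials in the $U_i$ restricted to $Y$ (one for each eigenspace of the covering involution). These identities can be checked symbolically against the known equations of $Y\subset\CP^9$ from \cite{BK19} together with the explicit formula for $U_{10}$ displayed above, or — more efficiently — by evaluating at many random points of $Y$ to high numerical precision and then confirming exactness symbolically on a sample.

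Second, I would show that the $100$ equations cut out exactly $W$. Since $W\to Y$ is \'etale of degree $2$, $K_W^2 = 18$ and $\chi(\mathcal O_W) = 2$, so by Riemann--Roch $h^0(W,2K_W)=20$ and $h^0(W,4K_W)=110$. The source of the multiplication map
$$
\Sym^2 H^0(W,2K_W)\longrightarrow H^0(W,4K_W)
$$
has dimension $\binom{21}{2}=210$, so the space of quadratic relations on the bicanonical image has dimension at least $100$; matching the claimed count shows, together with the first step, that the listed polynomials span the entire space of quadratic relations on $W$. To conclude that their common vanishing locus is exactly $W$, I would compute the Hilbert polynomial of the ideal $I$ they generate in \texttt{Macaulay2} and check it agrees with
$$
P(n) \;=\; \chi(W,2nK_W) \;=\; 2 + (2n^2-n)\,K_W^2 \;=\; 36n^2 - 18n + 2,
$$
supplemented by a check that $V(I)$ is irreducible, reduced, and of dimension $2$ (for example via a primary decomposition on a generic hyperplane section).

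The main obstacle is the sheer size of the calculation: a direct Gr\"obner basis for an ideal with $100$ quadratic generators in $20$ variables over $\Q(\sqrt{-7})$ is borderline intractable. The standard workaround is to reduce modulo a small rational prime $p$ that splits $-7$, carry out the Hilbert polynomial and primary decomposition computations in characteristic $p$, and lift back to characteristic zero using semicontinuity of the Hilbert function.
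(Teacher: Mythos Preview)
Your plan is sound and in fact more ambitious than what the paper does. Note that the paper deliberately labels this a \emph{Claim} rather than a Proposition and, in the remarks immediately following, distinguishes claims (derived by sampling random points) from statements verified symbolically. The paper then states explicitly: ``We did not try to do all of the formal checks on $W$, as the number of variables is likely too high, although we did check its Hilbert polynomial.'' So the paper's own verification consists of (i) finding the $100$ quadrics by vanishing at random points, and (ii) the Hilbert polynomial check in \texttt{Macaulay2}; it does not attempt the irreducibility/reducedness verification you outline.

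Your proposal therefore subsumes the paper's approach and goes further. One small logical slip: from $\dim\Sym^2 H^0(2K_W)=210$ and $h^0(4K_W)=110$ you only get that the space of quadratic relations has dimension \emph{at least} $100$; equality requires surjectivity of the multiplication map, which you have not argued. So ``matching the claimed count'' does not, by itself, show your $100$ quadrics span all quadratic relations. This does not matter for the conclusion, however, since your Hilbert polynomial check together with the irreducibility/reducedness verification already forces $V(I)=W$ scheme-theoretically regardless of whether there might be further quadrics. Your reduction-mod-$p$ strategy is exactly the right workaround and is in the spirit of what the paper does later for $X$ (using $p=37$).
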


\begin{rem}
We did not try to do all of the formal checks on $W$, as the number of variables is likely too high, although we did check its Hilbert polynomial in \cite[W\_check\_Hilbert.txt]{BJL23+}.
\end{rem}

\begin{rem}
We choose to make a distinction between statements that are verified by hand or by computer using symbolic manipulation, as opposed to the statements that are derived by picking some random points on the varieties in question, as is often the case in our \texttt{Mathematica} calculations. We call the former propositions, lemmas, et cetera, and use the term ``claim" to indicate the latter. 
\end{rem}


The next step (passing from $W$ to $Z$) is arguably the most technical in the entire process. It is similar to the method of \cite{BF20}. Let $r_{1,1},r_{-1,1},r_{1,a},r_{-1,a}$ be sections of $H^0(Z,K_Z)$ as in Corollary \ref{r-s}. Consider 
$$
 s_1 = r_{1,1} r_{-1,a}, s_2 = r_{-1,1} r_{1,a}, s_3 = r_{1,1} r_{-1,1}, s_4 = r_{1,a} r_{-1,a}.
$$
These $s_i$ are invariant with respect to $C_7\times \{1\}$ and therefore are pullbacks of sections of $H^0(W,2K_W)$. They must satisfy a number of restrictions.
\begin{itemize}
\item There holds $s_1 s_2 = s_3 s_4$.
\item Sections
$s_1$ and $s_2$ have weight $(a+1)$ with respect to the $\{1\} \times C_7$ action on $W$.
\item Sections $s_3$ and $s_4$ have weight $2$ and $2a$ respectively under the above action.
\item The covering involution of $W\to Y$ switches $s_1$ and $s_2$ and preserves $s_3$ and $s_4$. 
\end{itemize}
This information turns out to be sufficient to reconstruct 
$s_i$, up to scaling.

\begin{claim}
For the choice of $a=3$ we can pick
{\footnotesize
\begin{align*}
s_1= &-P_6 -\frac 14 P_9 -\frac 14(1+{\rm i} \sqrt 7)P_{16}+ P_{19} ,~
s_2 = - P_6 -\frac 14 P_9+ \frac14 (1+ {\rm i} \sqrt 7)P_{16} -P_{19},
\\
s_3 =& P_5,~~
s_4= \frac 18 (5+ {\rm i } \sqrt 7)P_1.
\end{align*}
}
\hskip -5pt
Here $P_i$ are the coordinates on $\C\P^{19}$ in which $W$ is embedded. There are no solutions for $a=6$.
\end{claim}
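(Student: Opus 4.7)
The plan is to localize the problem to a finite-dimensional linear algebra search within the bicanonical model $W\subset\C\P^{19}$, leveraging the eigenspace decomposition for two commuting symmetries: the residual $\{1\}\times C_7$ action (lifted from $C_7\subset\Aut(Y)$) and the covering involution of $W\to Y$. Each coordinate $P_i$ is a simultaneous eigenvector: its $\{1\}\times C_7$-weight coincides with the weight of the corresponding $U_i$ on $Y$, where for $i\geq 10$ this uses that $U_{10}$, and hence $\sqrt{U_{10}}$, is $C_7$-invariant; the involution acts as $+1$ on $P_0,\ldots,P_9$ and as $-1$ on $P_{10},\ldots,P_{19}$.

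For each candidate value $a\in\{3,6\}$, I would write $s_3$ in the weight-$2$ involution-invariant subspace, $s_4$ in the weight-$2a$ invariant subspace, $s_1+s_2$ in the weight-$(a+1)$ invariant subspace, and $s_1-s_2$ in the weight-$(a+1)$ anti-invariant subspace, each as a linear combination of the appropriate $P_i$'s with undetermined scalar coefficients. The product identity $s_1s_2=s_3s_4$ then becomes a quadratic relation in $P_0,\ldots,P_{19}$ whose coefficients are bilinear in the unknowns, required to vanish on $W$. Following the paper's ``claim'' convention, I would convert this into a system of conditions on the unknown scalars by numerical evaluation at sufficiently many random points of $W$, and then solve.

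For $a=3$ the expected outcome is a nontrivial family of solutions, reflecting the independent rescaling freedom in the pairs $(r_{\pm 1,1})$ and $(r_{\pm 1,3})$ that permute $s_i$ by $(\lambda\nu,\lambda\nu,\lambda^2,\nu^2)$; a convenient normalization then recovers the displayed formulas. For $a=6$ the same system should admit only the trivial solution, thereby excluding that value and forcing $a=3$ in Proposition \ref{content}.

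The main obstacle is organizational rather than conceptual: correctly sorting the $P_i$'s into the right eigenspaces, properly accounting for the multi-parameter scaling ambiguity so that the genuine solution family for $a=3$ is not confused with a spurious degeneracy, and testing at enough random $W$-points to certify the non-existence of nontrivial solutions for $a=6$. Once these are handled, the argument is purely linear-algebraic modulo evaluations on $W$, exactly the type of symbolic-numerical computation the rest of the paper performs.
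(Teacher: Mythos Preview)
Your proposal is correct and matches the paper's approach essentially verbatim: the paper sets up exactly these weight and involution constraints on $s_1,\ldots,s_4$, imposes the quadratic identity $s_1s_2=s_3s_4$ on $W$ via evaluation at random points, and solves the resulting system (in \cite[Section3.nb]{BJL23+}), finding the displayed solution for $a=3$ and none for $a=6$. Your decomposition into $s_1\pm s_2$ is a harmless rephrasing of the paper's statement that the involution swaps $s_1$ and $s_2$.
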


\begin{rem}
If $\pi:Z\to W$ is a quotient map, we have 
$$\pi_*\mathcal O_Z \cong \bigoplus_{i=0}^6  T^{\otimes i}$$
where $T$ is a $7$-torsion line bundle on $W$. The above is also precisely the splitting of  $\pi_*\mathcal O_Z$ into eigenbundles of the $C_7$-action. Since $r_{i,j}$ are eigenfunctions of the $C_7\times \{1\}$ action on $K_Z$, they can be viewed as sections of $\mathcal O_W(3H)\otimes T^{\otimes i}$ and this defines curves $\{r_{i,j}=0\}$ in $W$. Note that while we have not yet really constructed $r_{i,j}$, we can indeed construct these curves. For instance, $r_{1,1}=0$ is the set of common zeros of $s_1$ and $s_3$ on $W$.
\end{rem}

Once all the $s_i$ are found, there are multiple approaches to finding a birational model of $Z$. We chose an approach that manifestly preserved the $C_3$ action, in addition to the $D_{14}\times C_7$ action. Specifically, the field of rational functions of $Z$ is a degree $7$ cyclic extension of the field of rational functions of  $W$, and we can pick the basis of it to be 
\begin{equation}\label{basis}
(1, \frac  {r_{1,1}}{r_{0,0}}, \frac  {r_{4,2}}{r_{0,0}}, \frac  {r_{2,4}}{r_{0,0}}, \frac  {r_{-1,1}}{r_{0,0}}, \frac  {r_{-4,2}}{r_{0,0}}, \frac  {r_{-2,4}}{r_{0,0}})
\end{equation}
since these cover all possible first subscripts. We can then compute the multiplication table and use it to find the polynomial relations among $r_{i,j}$. We take care to keep $C_3$ action throughout.

\smallskip
For example, suppose we want to multiply $ \Big(\frac  {r_{4,2}}{r_{0,0}}\Big)\Big( \frac  {r_{2,4}}{r_{0,0}}\Big)$. We know that the product will be in the $\zeta_7^6$ eigenspace of $C_7\subset D_{14}$, so we have 
$$
 \Big(\frac  {r_{4,2}}{r_{0,0}}\Big)\Big( \frac  {r_{2,4}}{r_{0,0}}\Big) =  F \, \Big(\frac {r_{-1,1}}{r_{0,0}}\Big)
$$
for some rational function $F$ on $W$. Then $F$ has simple zeros at $\{r_{2,4}=0\}$ and $\{r_{4,2}=0\}$ and poles at $\{r_{0,0}=0\}$ and $\{r_{-1,1}=0\}$. This determines $F$ uniquely up to scaling, and it can be found explicitly as follows. We know that the divisor of $s_3$ is $\{r_{-1,1}=0\}+\{r_{1,1}=0\}$ and the divisor of $P_{10}$ is $2\{r_{0,0} = 0\}$. Therefore, the divisor of $F s_3 P_{10}$ is
$$
\{r_{2,4}=0\} + \{r_{4,2}= 0\} +\{r_{1,1}=0\} + \{r_{0,0}=0\}
$$
which means that $F s_3 P_{10}$ is a global section of $2K_W$. Such sections are given by quadratic polynomials in $P_0,\ldots,P_{19}$ (we make sure to look at a $C_3$-invariant subset of degree two monomials that span the complement to the space of $100$ relations of $W$) and it is a simple matter to put conditions of vanishing on random points of the above curves to determine $F$. We then use $C_3$ action to populate the multiplication table appropriately.

\smallskip
Similarly, if we want to find the square of, for example, $r_{2,4}$, we see that 
$$
 \Big(\frac  {r_{2,4}}{r_{0,0}}\Big)^2 =  F \, \Big(\frac {r_{4,2}}{r_{0,0}}\Big).
$$
For the $s_3'= r_{4,2} r_{-4,2}$ (a $C_3$-translate of $s_3$) we see that the divisor of $F s_3' P_{10}$ is 
$$
2\{r_{2,4}=0\} +\{r_{-4,2}=0\} + \{r_{0,0}=0\}.
$$
We observe that $F=0$ is singular at  $x\in \{r_{2,4}=0\}$ if and only if the gradient of $F$ vanishes on the tangent space of $x$ in $W$, which again allows us to find $F$ up to scaling.

\smallskip
Once the multiplication table is populated, up to scaling, the associativity property is used to fix scalings (up to some natural ambiguity coming from $C_3$-invariant scaling of $r_{i,j}$). We can then compute (recall that $a=3$) 
$$
\frac  {r_{1,3}}{r_{0,0}}, \frac  {r_{4,6}}{r_{0,0}}, \frac  {r_{2,5}}{r_{0,0}}, \frac  {r_{-1,3}}{r_{0,0}}, \frac  {r_{-4,6}}{r_{0,0}}, \frac  {r_{-2,5}}{r_{0,0}}
$$
in terms of the basis \eqref{basis} and use multiplication table to compute the relations among the $13$ basis elements $r_{i,j}$ of $H^0(Z,K_Z)$. While we do not get an embedding, we still get a birational model of $Z$ in $\C\P^{12}$. The details can be found in \cite[Section3.nb]{BJL23+}. 

\begin{rem}
We used the notation 
\begin{align*}(Z_0,\ldots,Z_{12}) &\\
&\hskip -50pt =(r_{0, 0}, 
   r_{1, 1}, r_{4, 2}, r_{2, 4},
   r_{-1, 1}, r_{-4, 2}, r_{-2, 4},
   r_{1, 3}, r_{4, 6}, r_{2, 5},
   r_{-1, 3}, r_{-4, 6}, r_{-2, 5})
\end{align*}
for the homogeneous coordinates on the birational model of $Z$ which hopefully does not cause confusion with $Z$ itself.   
It is important that $C_3$ symmetry is maintained throughout the computation. We also have control over the action of $C_2\subset D_{14}$,
which negates $r_{0,0}$ and permutes the rest of $r_{i,j}$. Specifically, the action of generators $g_3$ and $g_2$ of the respective cyclic groups is given by
\begin{align*}
g_3(Z_0,\ldots, Z_{12}) &= (Z_0,Z_2,Z_3,Z_1,Z_5,Z_6,Z_4,Z_8,Z_9,Z_7,Z_{11},Z_{12},Z_{10}),
\\
g_2(Z_0,\ldots, Z_{12}) &= (-Z_0,Z_4,Z_5,Z_6,Z_1,Z_2,Z_3,Z_{10},Z_{11},Z_{12},Z_7,Z_8,Z_9).
\end{align*}
\end{rem}

\section{Finding explicit equations of $X$.}\label{secX}
In this section, we explain how we obtained equations of $X$ in its bicanonical embedding. 

\smallskip
Since $X$ is obtained from $Z$ by taking $C_{14}$ quotient (by the $C_2\subset D_{14}$ and $\{1\}\times C_7$) in principle one can take the $C_{14}$-invariant global sections of $2K_Z$ and look for polynomial equations among them. The only difficulty that we had to overcome is the fact that $\Sym^2(H^0(Z,K_Z))\to H^0(Z,2K_Z)$ is not surjective, and we can only get a dimension $7$ subspace of the dimension $10$ space $H^0(W,2K_W)\cong H^0(Z,2K_Z)^{C_{14}}$ this way.  Specifically, these are given by linear combinations of
\begin{align*}
Z_0^2, Z_{12} Z_5 + Z_2 Z_9,  Z_{10} Z_6 +  Z_3 Z_7, 
Z_{11} Z_4 + Z_1 Z_8, 
\\
Z_{10} Z_3 +  Z_6 Z_7, Z_1 Z_{11} + Z_4 Z_8, 
Z_{12} Z_2 +  Z_5 Z_9.
\end{align*}

\smallskip
We resolved this issue in a fairly straightforward way. 
Multiplication by $Z_0$ sends $H^0(Z,2K_Z)^{C_{14}}$ into the space of $C_{7}$-invariant, $C_2$-anti-invariant  sections of $H^0(Z,3Z)$
which are zero on $\{r_{0,0}=0\}\subset Z$. While some of these are simply products of $Z_0$ with a $C_{14}$-invariant quadratic polynomials above, we got three additional basis elements, specifically the $C_3$ orbit of 
\begin{align*}
&Z_1 Z_{10}^2 - \frac 14 (-7 + 3 \ii \sqrt{7}) Z_{10} Z_2 Z_5 - 2 Z_1 Z_3 Z_5 + 
 \frac 12 (1 - \ii \sqrt{7}) Z_{12}^2 Z_6 \\
 &+ 2 Z_2 Z_4 Z_6 - Z_{11} Z_6^2 - Z_1 Z_{10} Z_7 + 
 Z_{10} Z_4 Z_7 + \frac 14 (-7 + 3 \ii \sqrt{7}) Z_2 Z_5 Z_7
 \\& - Z_4 Z_7^2 + Z_3^2 Z_8 - 
\frac 12 (1 - \ii \sqrt{7}) Z_3 Z_9^2,
\end{align*}
and then computed the $84$ cubic equations on them.

\smallskip
The final verifications of the smoothness of the resulting surface and of it being the fake projective plane in bicanonical embedding were performed in \cite[FPP\_M2.txt, FPP\_smoothness\_Hodge.txt]{BJL23+} using \texttt{Magma} \cite{magma} and \texttt{Macaulay2} \cite{M2}. To lower the runtime of our verification codes, we reduced the equations of $X$ modulo $p = 37$ which contains a square root of $-7$ and a non-trivial cubic root of $1$. This allowed us to apply a change of coordinates on the equations of $X$ so that the $C_3$ action is diagonalized. This process eliminates roughly $2/3$ of the monomials used in the original equations. 

\section{Further directions}\label{comments}
Techniques of this paper might allow for the computation of explicit equations of some other fake projective planes which are commensurable to the ones with known equations. We leave this for further REU projects, as this is a suitable training ground for young algebraic geometers. The sticky point is whether we can find a common normal subgroup $\Gamma_Z$ in the two groups $\Gamma_X$ and $\Gamma_Y$ in question that has a solvable quotient $\Gamma_Y/\Gamma_Z$. It is considerably more difficult to construct Galois covers which are not repeated cyclic covers.

\smallskip
Finally, just as a teaser to the reader, we would like to point out that we only know how to match \emph{complex conjugate pairs} of fake projective planes with their explicit embeddings, up to complex conjugation. For example, we are not aware of any method that would allow one to precisely say which choice of  $\sqrt{-7}$ in our equations of $X$ corresponds to the specific choice of the generators of $\Gamma_X$ in \cite{CS11+}.

\end{document}